\PassOptionsToPackage{nameinlink}{cleveref}
\documentclass[a4paper,UKenglish,cleveref, hyperref,autoref, thm-restate,lstlisting]{lipics-v2021}


\usepackage{caption}

\usepackage[table, svgnames, dvipsnames]{xcolor}

\usepackage{framed}
\usepackage{pgfplots}
\pgfplotsset{compat=1.15}
\usetikzlibrary{calc,positioning}

\usepackage{todonotes}

\usepackage{url}

\hypersetup{
    colorlinks = true
}

\bibliographystyle{plainurl}

\title{Tight Chernoff-Like Bounds under Limited Independence} 

\titlerunning{Tight Chernoff-Like Bounds under Limited Independence}


\author{Maciej Skorski}{University of Luxembourg}{maciej.skorski@gmail.com}{}{}
\funding{The research supported by the FNR grant C17/IS/11613923.}
\authorrunning{M. Skorski}

\Copyright{Maciej Skorski}

\begin{CCSXML}
<ccs2012>
<concept>
<concept_id>10002950.10003648.10003688</concept_id>
<concept_desc>Mathematics of computing~Statistical paradigms</concept_desc>
<concept_significance>500</concept_significance>
</concept>
</ccs2012>
\end{CCSXML}


\ccsdesc[500]{Mathematics of computing~Statistical paradigms}

\keywords{concentration inequalities, tail bounds, limited independence}

\category{}

\relatedversion{}

\supplement{}


\acknowledgements{The author thanks the reviewers of RANDOM'22 for insightful comments.}

\nolinenumbers 


\EventEditors{Amit Chakrabarti and Chaitanya Swamy}
\EventNoEds{2}
\EventLongTitle{Approximation, Randomization, and Combinatorial Optimization. Algorithms and Techniques (APPROX/RANDOM 2022)}
\EventShortTitle{\mbox{\scriptsize APPROX/RANDOM 2022}}
\EventAcronym{APPROX/RANDOM}
\EventYear{2022}
\EventDate{September 19--21, 2022}
\EventLocation{University of Illinois, Urbana-Champaign, USA (Virtual Conference)}
\EventLogo{}
\SeriesVolume{245}
\ArticleNo{15}

\begin{document}

\maketitle

\begin{abstract}
This paper develops sharp bounds on moments of sums of $k$-wise independent bounded random variables, under constrained average variance. The result closes the problem addressed in part in the previous works of Schmidt et al. and Bellare, Rompel. We also discuss other applications of independent interests, such as asymptotically sharp bounds on binomial moments.
\end{abstract}

\section{Introduction}

\subsection{Motivation}
Consider sums of random variables, possibly differently distributed. What can be said about the probability distribution, particularly the tails,
if we only assume \emph{$k$-wise independence}, that is any $k$ of the $n$ summands are independent? 
Such a dependency condition is an appealing concept studied in a broad class of problems related to pseudoradomness, including hashing, random graphs, random projections and circuits~\cite{luby2006pairwise,alon2008k,kane2010derandomized,bazzi2009polylogarithmic};
specifically, concentration results under $k$-wise independence find important applications including (but not limited to) constructions of pseudorandom generators~\cite{impagliazzo2012pseudorandomness}, 
load balancing~\cite{karger2004simple,schmidt1995chernoff}, derandomization~\cite{sack1999handbook,ghaffari2018derandomizing,censor2020derandomizing}, streaming algorithms~\cite{mcgregor2019better} and cryptography~\cite{bellare1994randomness,barak2003true,dodis2014key,ball2018non}.

At first glance, the problem seems well addressed by concentration inequalities, such as the classical bounds due to Bernstein, Chernoff, Hoeffding, Bennet~\cite{bernsteinprobability,chernoff1952measure,hoeffding1963probability,bennett1962probability} or their modern sub-gaussian or sub-gamma generalizations~\cite{boucheron2013concentration} (obtained from moment generating functions); at the very least one may hope to utilize more exotic moment bounds such as Rosenthal-type inequalities~\cite{rosenthal1970subspaces,boucheron2005moment} or more general frameworks~\cite{latala1997estimation}. 
However, the exponential moment methods are inadequate for limited dependence, whereas moment methods are hard to apply for sums of heterogenic terms. The state-of-the-art is held by  the two influential works~\cite{schmidt1995chernoff,bellare1994randomness} which resort to direct moment calculations, offering bounds for certain parameter regimes.

The goal of the current paper is to establish \emph{sharp moment bounds} for sums of bounded $k$-wise independent variables, strengthening the state-of-art results~\cite{schmidt1995chernoff,bellare1994randomness}.
As in prior work, we assume that the summands are bounded and the sum variance is known. Formally:
\begin{framed}
\begin{quote}
Let $S = \sum_{i=1}^n X_i$ be a sum of $k$-wise independent random variables, possibly differently distributed. Suppose that a) $X_i \in [-1,1]$ and b) the average variance is $\frac{1}{n}\sum_{i=1}^{n}\mathbb{V}[X_i] = \sigma^2$.
What is the \emph{best} bound on moments of $S-\mathbb{E}S$?
\end{quote}
\end{framed}
Answering this question obviously gives desired Chernoff-like tail bounds, via an application of Markov's inequality. This approach, called the moment method, is the state-of-art technique of establishing tail bounds~\cite{boucheron2013concentration,latala1997estimation}, even superior to the exponential moment method~\cite{philips1995moment}, so it should give us as much as we can get. 
\subsection{Our Contribution}

The novelty of this work has the following aspects
\begin{itemize}
\item sharp bounds are found for \emph{all} parameter regimes, which improves upon prior works
\item some elegant techniques, novel in this context, are demonstrated; particularly the powerful method of \emph{symmetrization} from high-dimensional probability~\cite{vershynin2018high}
and \emph{extreme inequalities} for symmetric polynomials~\cite{rosset1989normalized} \footnote{The prior work~\cite{schmidt1995chernoff} actually recognized usefulness of symmetry, but was not able to exploit it in the case of general $[-1,1]$-valued random variables.}
\item other applications, in particular sharp bounds for moments of binomial distribution
\end{itemize}

We now move to present our results, adopting the following notation: for two expressions $A,B$ we write $A\lesssim B$ when $A\leqslant K\cdot B$ for some absolute constant $K$, and $A\simeq B$ when the inequality holds in both direction. By $\|Z\|_d = (\mathbb{E}|Z|^d)^{1/d}$ we denote the $d$-th norm of a random variable $Z$. By $\mathbb{E}Z$ and $\mathbb{V}[Z]$ we denote, respectively, the mean and variance of $Z$.

\subsubsection{Sharp Bounds for $k$-wise Independent Sums}

The theorem below gives the complete answer to the posed problem.
\begin{theorem}[Moments of $k$-wise Independent Sums]\label{cor:explicit_bounds}
Consider random variables $(X_i)_{i=1}^{n}$ satisfying the following conditions 
\begin{enumerate}[(a)]
\item $(X_i)_i$ are $k$-wise independent ($k\geqslant 2$) and $|X_i-\mathbb{E}X_i|\leqslant 1$
\item $\sum_{i=1}^{n} \mathbb{V}[X_i]\leqslant n\sigma^2$ (the sum variance bounded)
\end{enumerate}
Then for $S=\sum_{i=1}^{n} X_i$ and any positive even integer $d\leqslant k$ we have:
\begin{align}\label{eq:branches}
\max_{(X_i)} \|S-\mathbb{E}S\|_d \simeq
M(n,\sigma^2,d)=
\begin{cases}
\sqrt{d n \sigma^2} & \log(d/n\sigma^2) < \max(d/n,2) \\
\frac{d}{\log(d/n\sigma^2)} &  \max(d/n,2)\leqslant \log(d/n\sigma^2) \leqslant d \\
(n\sigma^2)^{1/d} &  d<\log(d/n\sigma^2)
\end{cases},
\end{align}
where the maximum is over all r.vs. $(X_i)_i$ satisfying the conditions (a) and (b).

Moreover, the maximal value is realized (up to a constant) when
\begin{align}
X_i\sim B-B',\quad B,B'\sim^{iid}\mathrm{Bern}(p), p=\frac{1}{2}(1-\sqrt{1-2\sigma^2}).
\end{align}
\end{theorem}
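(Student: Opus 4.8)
The plan is to prove the two directions separately: an upper bound $\max_{(X_i)}\|S-\mathbb{E}S\|_d \lesssim M(n,\sigma^2,d)$ valid for all $k$-wise independent families, and a matching lower bound witnessed by the symmetrized Bernoulli construction $X_i\sim B-B'$. Throughout we may assume $\mathbb{E}X_i=0$ by centering (this only shrinks the interval bound to $[-1,1]$ as in hypothesis (a)), so $S=\sum_i X_i$ and we must estimate $\|S\|_d$ for even $d\le k$.

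\textbf{Upper bound.} Since $d\le k$, the $d$-th moment $\mathbb{E}S^d = \sum \mathbb{E}[X_{i_1}\cdots X_{i_d}]$ only involves products of at most $d\le k$ distinct variables, so it is \emph{identical} to the moment we would get under full independence with the same marginals; $k$-wise independence is exactly enough. I would first symmetrize: replacing $X_i$ by $X_i - X_i'$ (an independent copy) changes $\|S\|_d$ by at most a factor of $2$ by Jensen/triangle inequality, and makes each summand symmetric, which kills all odd-degree contributions and lets one pass to a Rademacher-weighted sum $\sum_i \varepsilon_i |X_i - X_i'|$ with $\varepsilon_i$ i.i.d.\ signs. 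Expanding $\mathbb{E}(\sum_i \varepsilon_i Y_i)^d$ with $Y_i = |X_i-X_i'|\in[0,2]$ and $\mathbb{E}Y_i^2 \le 2\mathbb{V}[X_i]$, the only surviving terms are even multi-indices, and using $Y_i^{2j}\le 2^{2j-2} Y_i^2$ for $j\ge 1$ one reduces the whole expansion to the symmetric functions of the variances $v_i=\mathbb{V}[X_i]$ with $\sum v_i\le n\sigma^2$. At this point the extremal-inequalities machinery for symmetric polynomials (the Maclaurin/Newton-type bounds referenced via~\cite{rosset1989normalized}) bounds $\mathbb{E}S^d$ by its value at the ``equal-mass'' configuration, i.e.\ essentially the moment of a sum of i.i.d.\ scaled Bernoullis, which is a binomial-type quantity. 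The three branches of $M(n,\sigma^2,d)$ then come out of carefully estimating binomial moments $\|{\rm Bin}(m,q)-mq\|_d$ in the three regimes $d\lesssim m q$ (Gaussian regime, answer $\sqrt{dmq}$), $mq \lesssim d \lesssim$ (moderate, answer $d/\log(d/mq)$, the Poisson/Bennett regime), and $d$ very large compared to $mq$ (the ``single spike'' regime, answer $(mq)^{1/d}$), with $mq\asymp n\sigma^2$.

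\textbf{Lower bound.} For the matching construction take $X_i\sim B_i - B_i'$ with $B_i,B_i'$ i.i.d.\ $\mathrm{Bern}(p)$ and $p=\tfrac12(1-\sqrt{1-2\sigma^2})$ chosen so that $\mathbb{V}[X_i]=2p(1-p)=\sigma^2$ (this needs $\sigma^2\le 1/2$; the case $\sigma^2$ close to its maximum is handled separately or absorbed into constants), and the $X_i$ fully independent (hence $k$-wise independent for every $k$) and valued in $\{-1,0,1\}\subseteq[-1,1]$. Then $S=\sum_i(B_i-B_i')$ and $\|S\|_d$ is, up to constants, $\|{\rm Bin}(n,p)-np\|_d$ for the relevant range of $p$; I would lower-bound this binomial moment in each of the three regimes by the standard tricks — central limit / Paley–Zygmund for the Gaussian branch, a direct single-atom lower bound $\mathbb{E}|S|^d \ge (2\ell)^d \Pr[\text{exactly }\ell\text{ of the }B_i\text{ are }1]$ with $\ell$ optimized for the middle branch, and $\ell=1$ for the last branch — to match $M(n,\sigma^2,d)$ up to an absolute constant.

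\textbf{Main obstacle.} The delicate part is the reduction of the fully expanded $d$-th moment of a sum of heterogeneous bounded variables to the homogeneous (binomial) case, i.e.\ showing the maximum of $\mathbb{E}S^d$ over all variance vectors with $\sum v_i\le n\sigma^2$ is attained (up to constants) at the all-equal vector. This is exactly where the symmetric-polynomial extremal inequalities enter, and it requires care because the elementary symmetric functions $e_j(v)$ are Schur-concave while power sums are Schur-convex, so one must track which combination actually governs $\mathbb{E}S^d$ after symmetrization and argue the right monotonicity; getting the constants to be absolute (uniform in $n,\sigma^2,d$) across the regime boundaries is the second source of friction, and I expect to spend most of the work there rather than on the binomial moment asymptotics, which are by now fairly standard.
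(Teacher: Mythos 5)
Your plan follows essentially the same route as the paper: reduce to full independence via the multinomial expansion (valid since $d\leqslant k$), symmetrize at a factor-$2$ cost, bound high even powers of each summand by its variance so that the expansion becomes a positive combination of elementary symmetric polynomials in the variances, invoke Maclaurin/Newton inequalities to show the equal-variance iid three-point (symmetrized Bernoulli) configuration is extremal, and then read off the three regimes from the resulting binomial-type moments. The only divergences are minor: the paper gets the matching lower bound from a two-sided exact expansion for the iid $\{-1,0,1\}$ sum (its Corollary~\ref{thm:best_iid}, via the explicit maximization of $\ell^d\binom{n}{\ell}\sigma^{2\ell}$) rather than from your CLT/Paley--Zygmund and atom-probability estimates, and it derives the three-branch binomial moment asymptotics itself (treating them as a new byproduct) instead of citing them as standard.
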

\begin{remark}[Value of $k$]
In applications value of $k$ should be possibly big, so that we can use as high moments $d$ as possible. For example, some cryptographic applications use $k = 80$~\cite{dodis2014key}.
\end{remark}
\begin{remark}[Formula Regimes]
The formulas may look a little exotic, especially to a reader familiar with previous works. The reason is that the novel bounds above are sharp and capture some non-standard behaviors.
The branch with $\sqrt{dn\sigma^2}$ should be familiar, as this is the $d$-th moment of gaussian distribution with variance $n\sigma^2$; in this regime it would produce the tail $\Pr[|S-\mathbb{E}S|>t]\leqslant \mathrm{e}^{-\Omega(t^2/n\sigma^2)}$.
The branch with $d/\log(d/n\sigma^2)$ gives the behavior slightly faster than this of the exponential distribution; it would produce the tail $\Pr[|S-\mathbb{E}S|>t]\leqslant \mathrm{e}^{-\omega(t)}$.
Finally, the branch with $(n\sigma^2)^{1/d}$ resembles the behavior of a distribution bounded by $\sigma^2$.
\end{remark}
\begin{remark}[Odd values of $d$]
It can be shown that the same upper bounds apply when $d>2$ is odd, due to interpolation inequalities~\cite{bergh2012interpolation}.
\end{remark}

\begin{remark}[Explicit Tail Bounds]
For $M$ as in \Cref{eq:branches} we obtain the following tail bound (which depends on the parameters regime), for any $t>0$
\begin{align}
\Pr[|S-\mathbb{E}S|>t]\leqslant  O(M(n,\sigma^2,d) / t)^d.
\end{align}
For $t=cM(n,\sigma^2,d)$ with an appropriate constant $c$, we obtain the tail of $2^{-\Omega(d)}$. 
\end{remark}

\subsubsection{Techniques}

We show that for even $d\leqslant k$ we can assume in addition c) full independence and d) full symmetry of the summands,  leveraging \emph{symmetrization}~\cite{vershynin2018high}.
Then we proceed in two steps. 
\paragraph{Characterization of Extreme Distribution}
First, we characterize "worst-case" distributions $X_i$ that maximize $\|\sum_i X_i\|_d$.
This result is the core of our approach and of broader interest, we thus present it as the standalone lemma.

\begin{lemma}[IID Majorization of Symmetric Sums]\label{thm:main1}
Let $(Z_i)_{i=1}^{n}$ be independent symmetric random variables with values in $[-1,1]$ with average variance $\sigma^2 = \frac{1}{n}\sum_{i=1}^{n}\mathbb{V}[Z_i]$.
Then for any positive even integer $d$ we have that
\begin{align}
\|\sum_i Z_i \|_d \leqslant \|\sum_i Z'_i \|_d
\end{align}
where $Z'_i$ are independent and identically distributed as
\begin{align}\label{eq:triple_distribution}
Z'_i \sim\begin{cases}
+1 & \text{w.p. } \sigma^2/2 \\
0 & \text{w.p. } 1-\sigma^2 \\
-1 & \text{w.p. } \sigma^2/2
\end{cases}.
\end{align}
\end{lemma}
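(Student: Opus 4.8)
The plan is to expand $\mathbb{E}\bigl(\sum_i Z_i\bigr)^d$ by multinomial expansion and exploit independence and symmetry to reduce the problem to a statement about a single coordinate. Since the $Z_i$ are independent and symmetric, $\mathbb{E}\bigl(\sum_i Z_i\bigr)^d = \sum_{\alpha} \binom{d}{\alpha}\prod_i \mathbb{E}Z_i^{\alpha_i}$, and symmetry kills every term in which some $\alpha_i$ is odd; so only even exponents survive, and every surviving term is a product of factors of the form $\mathbb{E}Z_i^{2m}$ with $2m \le d$. The key observation is that for a symmetric variable $Z_i$ supported in $[-1,1]$ with variance $v_i = \mathbb{V}[Z_i] = \mathbb{E}Z_i^2$, all higher even moments are sandwiched: $\mathbb{E}Z_i^{2m} \le \mathbb{E}Z_i^{2} = v_i$ for $m\ge 1$ (because $|Z_i|\le 1$ forces $Z_i^{2m}\le Z_i^2$ pointwise). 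Meanwhile the three-point distribution in \eqref{eq:triple_distribution} with parameter $v_i$ in place of $\sigma^2$ attains the extreme: its $2m$-th moment equals exactly $v_i$ for every $m\ge 1$. So replacing each $Z_i$ by the corresponding three-point variable $\widetilde Z_i$ of the same variance can only increase each multinomial term, hence increases the whole $d$-th moment: $\|\sum_i Z_i\|_d \le \|\sum_i \widetilde Z_i\|_d$.

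The remaining step is to pass from the non-identically-distributed three-point family $(\widetilde Z_i)$ with variances $v_i$, $\frac1n\sum_i v_i = \sigma^2$, to the iid family $(Z'_i)$ all with variance $\sigma^2$. Here I would again write out the multinomial expansion: $\mathbb{E}\bigl(\sum_i \widetilde Z_i\bigr)^d = \sum_{\alpha\ \mathrm{even}} \binom{d}{\alpha}\prod_{i:\alpha_i>0} v_i$, where the product ranges only over the coordinates actually used, since $\mathbb{E}\widetilde Z_i^{2m}=v_i$ independently of $m$. Thus each coefficient is a symmetric function of the $v_i$'s — in fact a weighted sum of products $\prod_{i\in A} v_i$ over subsets $A$ of a fixed size $|A|=j$ (the number of distinct active coordinates), which is exactly the elementary symmetric polynomial $e_j(v_1,\dots,v_n)$ up to a positive combinatorial weight. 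By Maclaurin's inequality (equivalently, the extreme/Newton inequalities for symmetric polynomials invoked in the paper via \cite{rosset1989normalized}), under the linear constraint $\sum_i v_i = n\sigma^2$ with $0\le v_i$, each normalized $e_j$ is maximized when all $v_i$ are equal to $\sigma^2$; since every coefficient in the expansion is a nonnegative combination of such $e_j$'s, the whole sum is maximized at $v_i\equiv\sigma^2$, which is the iid case. Combining the two steps gives $\|\sum_i Z_i\|_d \le \|\sum_i Z'_i\|_d$.

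I expect the main obstacle to be the bookkeeping in the second step: translating the multinomial expansion of $\mathbb{E}(\sum_i\widetilde Z_i)^d$ into a clean nonnegative combination of elementary symmetric polynomials $e_j(v_1,\dots,v_n)$, so that Maclaurin's inequality applies term by term. One has to be careful that the combinatorial coefficient attached to a subset $A$ of active coordinates depends only on $|A|$ and not on which coordinates are in $A$ (this is where full symmetry of the setup is used) and that it is genuinely nonnegative; the constraint $0\le v_i\le 1$ must also be respected, though only the upper endpoint $v_i=\sigma^2\le 1/2$ — forced by symmetry and $|Z_i|\le1$ — is actually needed for feasibility of the three-point law. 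The first step, by contrast, is an essentially pointwise domination and should be routine. A minor technical point worth flagging: the argument shows $\mathbb{E}(\sum_i Z_i)^d \le \mathbb{E}(\sum_i Z'_i)^d$ as real numbers, and taking $d$-th roots (legitimate since $d$ is even and both sides are nonnegative) yields the stated norm inequality.
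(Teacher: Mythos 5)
Your proposal is correct and follows essentially the same route as the paper's proof: multinomial expansion, symmetry killing odd moments, the pointwise bound $\mathbb{E}Z_i^{2m}\leqslant \mathbb{V}[Z_i]$ with equality for the three-point law, regrouping by the set of active coordinates into nonnegative multiples of elementary symmetric polynomials, and Maclaurin's inequality to equalize the variances (the paper merely merges your two steps into one). The only quibble is your parenthetical claim that $\sigma^2\leqslant 1/2$ is forced by symmetry and $|Z_i|\leqslant 1$; in fact these only give $\sigma^2\leqslant 1$, which is exactly what is needed for the three-point distribution to be well defined, so nothing in the argument is affected.
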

\begin{remark}[Interpretation]
Observe that $\mathbb{V}[Z'_i] = 2\cdot\sigma^2/2=\sigma^2$, thus the theorem essentially says that moments of the sum $\sum_i Z_i$ are maximized for $Z_i$ that are iid with the distribution \eqref{eq:triple_distribution}. This three-point distribution is \emph{extreme}, in the sense that it pushes as much mass as possible towards the edge of the interval constraint. 
This behavior may look intuitive, but we should beware of such intuitions as even for simple symmetric problems whether the maximizer's behavior is "push to boundary" or "pull to the middle" may not be that intuitive, depending on Schur convexity properties of the optimized expression~\cite{eaton1982review}; to be specific the problems of maximization of $\sum_{i\not=j}p_i p_j$ and $\sum_i p_i^2$ have quite different behavior. Our proof requires some non-trivial facts about multivariate symmetric polynomials.
\end{remark}

\begin{remark}[Proof Techniques]
The symmetry assumption is crucial, and makes it possible to greatly simplify the multinomial expansion of the moment formula. We manage to regroup expressions and see them as positive combinations of elementary symmetry polynomials; then specialized inequalities from the theory of symmetric functions~\cite{10.2307/43667273} come then to the rescue, allowing for proving that our extreme distribution is indeed the maximizer.
\end{remark}

\paragraph{Closed-Form Bounds for Extreme Distributions}

In addition to characterizing the worst-case behavior, we give the closed-form formula for the bound in \Cref{thm:main1}. As we will see later, this is also a fact of broader interest; for example, we use it to derive bounds for binomial moments which are sharp in all parameter regimes.

\begin{corollary}[Best bounds for IID]\label{thm:best_iid}
For independent ($d$-wise independent) symmetric $Z_i$ with values $\{-1,0,1\}$ and variance $\sigma^2$ and positive even integer $d$ we have 
\begin{align}
    \left\|\sum_{i=1}^{n}Z_i\right\|_d \simeq
\begin{cases}
\sqrt{d n \sigma^2} & \log(d/n\sigma^2) < \max(d/n,2) \\
\frac{d}{\log(d/n\sigma^2)} &  \max(d/n,2)\leqslant \log(d/n\sigma^2) \leqslant d \\
(n\sigma^2)^{1/d} &  d<\log(d/n\sigma^2)
\end{cases}.
\end{align}
\end{corollary}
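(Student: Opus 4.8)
The plan is to prove \Cref{thm:best_iid} by directly estimating $\left\|\sum_i Z_i\right\|_d$ for the three-point distribution in \eqref{eq:triple_distribution}, both from above and below, and matching the two up to absolute constants in each of the three stated regimes. Since the $Z_i$ are iid with $Z_i \in \{-1,0,1\}$, it is natural to decouple the ``support'' from the ``signs'': write $Z_i = \epsilon_i B_i$ where $B_i \sim \mathrm{Bern}(\sigma^2)$ indicates whether $Z_i \ne 0$ and $\epsilon_i$ is an independent Rademacher sign. Then, conditioning on $(B_i)$, the sum $\sum_i Z_i$ is a Rademacher sum of length $N := \sum_i B_i$, and by Khintchine's inequality $\mathbb{E}\!\left[\left|\sum_i Z_i\right|^d \,\middle|\, N\right] \simeq \left(\sqrt{dN} \wedge N\right)^d$ for even $d$ (more precisely $\|\,\cdot\,\|_d \simeq \sqrt{dN}$ when $d \le N$ and $\simeq N$ when $d \ge N$; the crude bound $\le N$ always holds since the sum is at most $N$ in absolute value). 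Therefore the whole problem reduces to understanding the quantity $\mathbb{E}\big[(\sqrt{dN}\wedge N)^d\big]$ where $N \sim \mathrm{Bin}(n,\sigma^2)$, i.e.\ to estimating binomial moments of a slightly unusual functional.

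The next step is to evaluate $\mathbb{E}\big[(\sqrt{dN}\wedge N)^d\big]^{1/d}$ by a standard ``largest term wins'' analysis of the binomial. Write $\mu = n\sigma^2 = \mathbb{E}N$. Two competing effects determine the answer: the typical value of $N$ concentrates near $\mu$, which contributes roughly $\sqrt{d\mu}\wedge\mu$; but the $d$-th power heavily weights the upper tail, so large deviations $N \approx m$ for $m \gg \mu$ contribute $\binom{n}{m}\sigma^{2m}(\sqrt{dm})^d$ (or $m^d$ when $m\le d$), and one must find the maximizing $m$. Using $\binom{n}{m}\sigma^{2m} \le (en\sigma^2/m)^m = (e\mu/m)^m$ and optimizing $m \mapsto (e\mu/m)^m (dm)^{d/2}$ (resp.\ $(e\mu/m)^m m^{d}$) over $m$, the optimum is governed by $\log(\mu/m) \approx -$(something involving $d/m$), and one finds that the tail term dominates the concentration term precisely when $\log(d/\mu)$ exceeds the threshold $\max(d/n,2)$, giving $\|\cdot\|_d \simeq d/\log(d/\mu)$; when $d < \log(d/\mu)$ the event $N \ge 1$ is already so rare that a single nonzero summand dominates and we get $\|\cdot\|_d \simeq (n\sigma^2)^{1/d}$ (this is just $\Pr[N\ge 1]^{1/d} \simeq \mu^{1/d}$ since $\mu$ is tiny in that regime); and otherwise the Gaussian-like term $\sqrt{d\mu}$ wins. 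Assembling these cases reproduces exactly the three branches. The matching lower bounds come from restricting the expectation to the single best value $N = m^*$: $\mathbb{E}[(\sqrt{dN}\wedge N)^d] \ge \Pr[N = m^*]\cdot(\sqrt{dm^*}\wedge m^*)^d$, and a standard two-sided estimate on $\binom{n}{m^*}\sigma^{2m^*}$ shows this is within a constant (in the exponent, hence an absolute constant after taking $d$-th roots) of the upper bound.

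I expect the main obstacle to be the bookkeeping in the large-deviation optimization: the function $m \mapsto (e\mu/m)^m$ is log-concave in $m$ but the extra factors $(dm)^{d/2}$ or $m^d$ shift the maximizer, and one has to be careful that the optimizing $m^*$ is a legitimate integer in $[1,n]$, that the regime boundaries $\log(d/\mu) = 2$, $\log(d/\mu) = d/n$, and $\log(d/\mu) = d$ are exactly where the dominant term switches, and that constants absorbed into $\simeq$ do not blow up near those boundaries (e.g.\ the transition between $\sqrt{dm^*}$ and $m^*$ inside the Khintchine estimate happens around $m^* \simeq d$, which must be reconciled with the branch boundaries). A clean way to control all of this uniformly is to work with $L := \log(1/\sigma^2) = \log(n/\mu)$ and $r := \log(d/\mu)$ as the governing parameters and to note that $\binom{n}{m}\sigma^{2m} \simeq \exp(-m\log(m/\mu) + m + O(\log m))$ via Stirling, so that $\log \mathbb{E}[\cdot] \simeq \max_m\big(-m\log(m/\mu) + m + \tfrac{d}{2}\log(dm)\big)$ up to lower-order terms; a one-variable calculus computation then pins down $m^*$ and the value in each regime. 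Once the binomial-moment estimate is in hand, the corollary follows by combining it with the conditional Khintchine bound and taking expectations, with the final $d$-th root converting multiplicative $\exp(O(d))$ slack into an absolute constant.
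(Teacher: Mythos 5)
Your route is genuinely different from the paper's. The paper never leaves the multinomial expansion: using symmetry and $|Z_i|^{2j}=|Z_i|^2$ it writes $\mathbb{E}|\sum_i Z_i|^d$ \emph{exactly} as $\sum_{\ell=1}^{d/2}F(\ell)\binom{n}{\ell}\sigma^{2\ell}$, where $\ell$ counts the distinct indices appearing in a monomial, proves $F(\ell)^{1/d}\simeq\ell$ by Stirling, and then optimizes the one-variable function $(d/q)(n\sigma^2/d)^{1/q}$ over $q=d/\ell\in[\max(2,d/n),d]$ (this is \Cref{eq:sup4} and the analysis of $g$). You instead decompose $Z_i=\epsilon_iB_i$, apply a two-sided Khintchine bound conditionally on $N=\sum_iB_i$, and reduce to estimating $\mathbb{E}[(\sqrt{dN}\wedge N)^d]$ for $N\sim\mathrm{Bin}(n,\sigma^2)$ by a Laplace/largest-term analysis. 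The reduction itself is sound and attractive (it also makes the connection to binomial moments, which the paper only extracts afterwards as a corollary), and your large-deviation optimization is essentially the same one-variable problem the paper solves, so the regime boundaries will come out right.

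However, there is a concrete gap in your matching-up-to-constants step, precisely in the first (Gaussian) branch when $n\sigma^2$ is much larger than $e^{d}$. Your lower bound keeps only the single term $\Pr[N=m^*](\sqrt{dm^*}\wedge m^*)^d$; with $m^*\approx\mu:=n\sigma^2$ this carries the local-limit factor $\Pr[N=m^*]\approx\mu^{-1/2}$, which is \emph{not} $e^{-O(d)}$ when $\log\mu\gg d$, so after taking $d$-th roots you lose an unbounded factor $\mu^{\Theta(1/d)}$ (e.g.\ $d=2$, $\mu=n$ large), contradicting your claim that the slack is ``a constant in the exponent''; no other single $m$ repairs this, and you need a constant-mass window instead, e.g.\ monotonicity of $f$ plus the mean--median inequality, $\mathbb{E}f(N)\geqslant f(\lfloor\mu\rfloor)\Pr[N\geqslant\lfloor\mu\rfloor]\geqslant\frac12 f(\lfloor\mu\rfloor)$. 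Symmetrically, the upper bound ``sum $\leqslant$ (number of terms) $\times$ max'' runs over $m=0,\dots,n$, and $(n+1)^{1/d}$ (or even $(\log n)^{1/d}$ after dyadic grouping) is again unbounded for small $d$, so you must argue geometric decay of the terms away from the maximizer (Chernoff on $\Pr[N\geqslant m]$ plus a layer-cake/summation-by-parts step), not just count terms. This is exactly the point where the paper's grouping is structurally better: its maximum ranges only over $\ell\leqslant d/2$, so the term-count loss is $(d/2)^{1/d}=O(1)$, and its expansion is an identity, so the max-term lower bound costs no probability factor at all. With the window lower bound and a tail-decay upper bound added, your argument goes through; as written, the constant-factor matching fails in that regime.
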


\begin{remark}[Proof techniques]
The proof requires some effort to compute moments for \Cref{eq:triple_distribution}. Loosely speaking, we leverage the specific form of the distribution to obtain regular combinatorial patterns in multinomial expansions.
We then obtain an explicit formula, being a weighted sum of binomial-like expressions which involve $n$, $d$ and $\sigma$. Establishing the order of growth, with the help of some calculus, completes the proof.
\end{remark}

\subsection{Related Work}

There are many bounds which cover different models of dependencies among random variables, for example Janson's correlation inequality~\cite{janson1998new} which has become very popular in analyses of random graphs~\cite{dubhashi2009concentration}, 
or the theory of negative dependence~\cite{block1982some} best known from applications to various "balls and bins" problems~\cite{dubhashi1996balls}.
However the focus of this paper is on \emph{$k$-wise independence}, in which the state-of-art bounds are due to Schmidt at al.~\cite{schmidt1995chernoff} and Bellare and Rompel~\cite{bellare1994randomness}, derived in the essentially same setup as ours (moment bounds under the variance constraint). These bounds, although useful for many applications, hold only in certain regimes and are not sharp in general; when discussing our applications we will show that in most cases they are inferior to \Cref{cor:explicit_bounds}.

It in our work we consider the most natural variance constraint, following prior works~\cite{schmidt1995chernoff,bellare1994randomness}. However, one might consider more exotic structural assumptions; recently there has been an attempt, limited only to $k=2$, to characterize worst bounds by exploring the whole sequence (rather than the sum variance) of Bernoulli parameters of $X_i$~\cite{ramachandra2020tight}.

Regarding the established concentration bounds, we will see that known inequalities actually imply stronger bounds that those developed by Schmidt et al. and Bellare, Rompel~\cite{schmidt1995chernoff,bellare1994randomness}. However, even with the use of state-of-art moment inequalities~\cite{boucheron2013concentration} we were not able to recover the sharp bounds from our main result and the characterization from \Cref{thm:main1}. The key
challenge is to precisely characterize the worst-case behavior, while allowing \emph{differently distributed} random variables.

An interesting way of attacking the problem, possibly working for much more exotic constraints, may be to formally follow the presented idea of majorization and establish formally some Schur-convexity properties; this approach has been successfully applied in the past to many other problems (see~\cite{eaton1970note} and follow-up works).

\subsection{Applications}


\subsubsection{Limited Independence: Clarifying the State-of-Art}

We will demonstrate how our bounds improve on those of Schmidt at al.~\cite{schmidt1995chernoff} and Bellare and Rompel~\cite{bellare1994randomness}, clarifying this way the state-of-the-art. In what follows we assume, as in our theorem, that $\|X_i-\mathbb{E}X_i|\leqslant 1$, $X_i$ are $k$-wise independent, and $d\leqslant k$ for positive even $d$.
The best bound due to Schmidt at al. reads as (cf Eq. 10 in~\cite{schmidt1995chernoff})
\begin{align*}
\|S-\mathbb{E}S\|_d^d \leqslant \sqrt{2}\cdot \mathrm{cosh}(\sqrt{d^3/36C})\cdot (dC/\mathrm{e})^{d/2},\quad C\geqslant n\sigma^2,\ \sigma^2 = \mathbb{V}[S]/n.
\end{align*}
The authors did not fully optimize the choice of $C$, offering a bunch of weaker corollaries instead. In order to clarify the state-of-the-art, we do this effort (see \Cref{sec:schmidt_optimized}) obtaining
\begin{align}
\|S-\mathbb{E}S\|_d \lesssim \max(\sqrt{dn\sigma^2},d). 
\end{align}
When $dn\sigma^2\geqslant 1$ the formula matches ours, but otherwise it is much worse: by a factor of $\log(d/n\sigma^2)$ in the regime $\max(2,d/n)\leqslant \log(d/n\sigma^2)\leqslant d$, and by a factor
of $d$ in the regime $d< \log(d/n\sigma^2)$ (which necessarily means $n\sigma^2<1$). In applications, these factors can be a big constant or more, so derived tail bounds are worse by a big constant in the exponent.

In turn, the bound due to Bellare and Rompel~\cite{bellare1994randomness} states that when $X_i\in[0,1]$ 
\begin{align}\label{eq:bellare_bound}
\|S-\mathbb{E}S\|_d \lesssim \min(\sqrt{d n }, \sqrt{d n \mu + d^2}),\quad \mu = \frac{1}{n}\mathbb{E}S.
\end{align}
We claim this is worse than our optimized version of Schmidt at al., in all regimes. Namely,
\begin{align*}
\max(\sqrt{dn\sigma^2},d) \lesssim \min(\sqrt{d n }, \sqrt{d n \mu + d^2}). 
\end{align*}
Indeed, when $d>n$ the left-hand side is at most $n$, while the right-hand side is at least $n$. When $d\leqslant n$, due to $\mu\leqslant 1$ (a consequence of $X_i\leqslant 1$) we see that
$\min(\sqrt{d n }, \sqrt{d n \mu + d^2})\simeq \sqrt{dn\mu+d^2}$. But we have $\mathbb{V}[X_i]\leqslant \mathbb{E}X_i $, as the consequence of $0\leqslant X_i\leqslant 1$, and thus $\sigma^2\leqslant \mu$;
this shows $\min(\sqrt{d n }, \sqrt{d n \mu + d^2}) \gtrsim \sqrt{dn\sigma^2}$ and the claim follows.

This discussion should be of broader interest to the TCS community, as it seems that no rigorous comparison between~\cite{bellare1994randomness} and~\cite{schmidt1995chernoff} has been done before (the surveys such as~\cite{Abbas2011} and application works credit both exchangably). In \Cref{tab:summary} we give a readable summary.

\begin{table}
\captionsetup{font=footnotesize}
\centering{
\resizebox{0.98\textwidth}{!}{
\def\arraystretch{1.3}
\begin{tabular}{|l|l|l|}
\hline
\rowcolor{Gainsboro!60}
Bound on $\|S-\mathbb{E}S\|_d$ & Author & Assumptions \\
\hline
$\max(\sqrt{dn\sigma^2},d/\log(d/n\sigma^2),(n\sigma^2)^{1/d})$  & \textbf{this paper} & $n\sigma^2 = \mathbb{V}[S], \ |X_i-\mathbb{E}X_i|\leqslant 1$\\
\hline
$ \max(\sqrt{dn\sigma^2},d)$ & Schmidt at al., \textbf{optimized} & $n\sigma^2 = \mathbb{V}[S],  |X_i-\mathbb{E}X_i|\leqslant 1$ \\
\hline
$\min(\sqrt{d n }, \sqrt{d n \mu + d^2}) $ & Bellare and Rompel & $n\mu = \mathbb{E}[S],\ 0\leqslant X_i\leqslant 1$ \\ 
\hline
\end{tabular}
}
}
\bigskip
\caption{Bounds for moments of sum $S=\sum_{i=1}^{n}X_i$ of $d$-wise independent random variables, where $d\geqslant 2$. As discussed, our bounds are strictly better than those of Schmidt at al., which in turn are strictly better than those of Bellare and Rompel.}
\label{tab:summary}
\end{table}

\subsubsection{Obtaining Previous Results form Classical Inequaliteis}

Our literature search shows, perhaps surprisingly, that the optimized bounds of Schmidt at. al are actually a \emph{simple consequence of
classical inequalities}; we note that the prior works~\cite{bellare1994randomness,schmidt1995chernoff} do not discuss the related literature on concentration bounds. The intent of this discussion is to bring those inequalities to the awareness of the wider TCS audience, particularly given the huge interest and  the citation credit given to the bounds in~\cite{bellare1994randomness,schmidt1995chernoff}.

Assume that $X_i$ are $k$-wide independent; recall that the event moment of order $d\leqslant k$ can be calculated as if the summands were independent. More precisely, we have $\sum_i \|X_i -\mathbb{E}X_i\|_d = \sum_i \|X'_i -\mathbb{E}X'_i\|_d$ where $X'_i$ are distributed as $X_i$ and independent. The tail bounds due to a century old (!) \emph{Bernstein's inequality}~\cite{bernsteinprobability} imply that
the tail of $S'=\sum_i X_i'$ satisfies $\Pr[|S'-\mathbb{E}S'|>t] \leqslant \exp(-\Theta(\min(t^2/n\sigma^2,t)))$ for any positive $t$,
if $X_i$ are bounded, and $n\sigma^2 = \sum_i\mathbb{V}[ X'_i]=\sum_i \mathbb{V}[X'_i]$. By the standard tail integration formula,
we find that the moments of the IID sum are $\|S'-\mathbb{E}S'\|_d \lesssim \max(\sqrt{nd\sigma^2},d)$. As remarked, this matches $\|S-\mathbb{E}S\|_d$ when $d<k$, so we recover the optimized (!) bounds of Schmidt at al., and implies the bounds of Bellare and Rompel.
Another argument can be given by the use of \emph{Rosenthal's inequality}, a version of which~\cite{figiel1997extremal} implies $
\|S-\mathbb{E}S\|_d \lesssim d\cdot (\sum_{i=1}^{n} \mathbb{E}|X_i-\mathbb{E}X_i]^{d} )^{1/d} + d^{1/2}\cdot (\sum_{i=1}^{n} \mathbb{E}|X_i-\mathbb{E}X_i]^{2} )^{1/2}$. This can be further bounded by $\max(\sqrt{dn\sigma^2},d)$.

\subsubsection{Sharp Explicit Bounds on Binomial Moments}

Somewhat surprisingly, to the best of author's knowledge, there are no good closed-form estimates on moments of the binomial distribution, despite the clear demand from applications (such as the analysis of random projections~\cite{allen2014sparse,jagadeesan2017simple}). 
The sharp (up to an $o(1)$ relative error term) tail bounds due to Littlewood~\cite{littlewood1969probability,mckay1989littlewood} in theory imply sharp moment estimates, but calculations lead to very difficult integrals with Kullback-Leibler divergence in the exponent.
We obtain closed-form bounds for even binomial moments as a byproduct of our analysis, which are sharp in all paramater regimes.
More precisely, we have
\begin{corollary}
Let $S\sim\mathrm{Binom}(n,p)$ where $p\leqslant 1/2$ and $d$ be a positive even integer. Then
\begin{align}
    \left\|S-\mathbb{E}S\right\|_d \simeq
\begin{cases}
\sqrt{d n p} & \log(d/np) < \max(d/n,2) \\
\frac{d}{\log(d/np)} &  \max(d/n,2)\leqslant \log(d/np) \leqslant d \\
(np)^{1/d} &  d<\log(d/np)
\end{cases}.
\end{align}
\end{corollary}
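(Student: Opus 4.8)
The plan is to reduce the binomial case to the i.i.d.\ $\{-1,0,1\}$ case already settled by \Cref{thm:best_iid}, using symmetrization. Write $S=\sum_{i=1}^n Y_i$ with $Y_i$ i.i.d.\ $\mathrm{Bern}(p)$, let $(Y_i')$ be an independent copy of $(Y_i)$, and put $Z_i:=Y_i-Y_i'$. Each $Z_i$ is symmetric, takes values in $\{-1,0,1\}$, and satisfies $\Pr[Z_i=\pm1]=p(1-p)$, $\mathbb{V}[Z_i]=2p(1-p)=:\tilde\sigma^2$; that is, the $Z_i$ are exactly i.i.d.\ copies of the three-point law \eqref{eq:triple_distribution} with variance parameter $\tilde\sigma^2$. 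Hence \Cref{thm:best_iid} gives $\|\sum_i Z_i\|_d\simeq M(n,\tilde\sigma^2,d)$, where $M$ denotes the piecewise formula of \Cref{eq:branches}.

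For the upper bound, $d$ even makes $x\mapsto x^d$ convex, so conditioning on $(Y_i)$ and averaging over $(Y_i')$ (using $\mathbb{E}Y_i'=p$) together with Jensen yields
\begin{align*}
\mathbb{E}\Big(\sum_i (Y_i-p)\Big)^d=\mathbb{E}_Y\Big(\mathbb{E}_{Y'}\sum_i(Y_i-Y_i')\Big)^d\le \mathbb{E}\Big(\sum_i Z_i\Big)^d,
\end{align*}
so $\|S-\mathbb{E}S\|_d\le\|\sum_i Z_i\|_d\simeq M(n,\tilde\sigma^2,d)$. For the lower bound, regroup the $Y_i'$ into an independent binomial $S'=\sum_i Y_i'\sim\mathrm{Binom}(n,p)$, so that $\sum_i Z_i=(S-np)-(S'-np)$, and the triangle inequality gives $\|\sum_i Z_i\|_d\le \|S-np\|_d+\|S'-np\|_d=2\|S-\mathbb{E}S\|_d$, hence $\|S-\mathbb{E}S\|_d\gtrsim M(n,\tilde\sigma^2,d)$. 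Combining, $\|S-\mathbb{E}S\|_d\simeq M(n,2p(1-p),d)$.

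It remains to replace the variance parameter $2p(1-p)$ by $p$. Since $p\le1/2$ we have $p\le 2p(1-p)\le 2p$, so it suffices to show $M(n,\sigma^2,d)\simeq M(n,c\sigma^2,d)$ for constants $c\in[1,2]$. The map $\sigma^2\mapsto M(n,\sigma^2,d)$ is non-decreasing (up to constants it is $\|\sum_i Z_i\|_d$ for the three-point law, which grows with the variance by an obvious coupling), so it is enough to verify the ``doubling'' estimate $M(n,2\sigma^2,d)\lesssim M(n,\sigma^2,d)$: on the first and third branches the value changes by the bounded factors $\sqrt2$ and $2^{1/d}\le 2$; on the middle branch $\log(d/2n\sigma^2)=\log(d/n\sigma^2)-\log 2$ stays within a constant factor of $\log(d/n\sigma^2)$ because there $\log(d/n\sigma^2)\ge2$; and if doubling $\sigma^2$ crosses a regime threshold, the two adjacent branches agree up to a constant at that threshold. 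This gives the claimed estimate with $p$ in place of $2p(1-p)$.

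The only non-mechanical point is this last robustness step — one must check that shrinking $d/n\sigma^2$ by a constant factor neither blows up $d/\log(d/n\sigma^2)$ (ruled out by $\log(d/n\sigma^2)\ge2$ on the middle branch) nor moves the parameter into a regime where the formula is genuinely different (ruled out by the matching of adjacent branches at the thresholds). Everything else is a direct application of \Cref{thm:best_iid} and the standard symmetrization inequalities.
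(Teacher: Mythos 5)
Your proposal is correct and follows essentially the same route as the paper: symmetrize $S$ so that $S-S'$ becomes a sum of i.i.d.\ three-point variables of variance $2p(1-p)$, apply \Cref{thm:best_iid} together with the two-sided symmetrization bounds of \Cref{prop:symmetrize_moments}, and then trade the variance parameter $2p(1-p)$ for $p$ using $p\leqslant 2p(1-p)\leqslant 2p$. The only difference is that you spell out the robustness of the piecewise formula under a constant-factor change of the variance, which the paper merely asserts in one line.
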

This follows from the fact that the extreme variables $Z'_i$ in our main result can be expressed as symmetrized Bernoulli distributions, namely $Z'_i  \sim B-B'$ where $B,B'\sim^{iid}\mathrm{Bern}(p)$ with $p=\frac{1}{2}(1-\sqrt{1-2\sigma^2})$. Let $S\sim \mathrm{Binom}(n,p)$. By symmetrization $\|S-\mathbb{E}S\|_d\simeq \|S-S'\|_d$ and thus  $\|S-\mathbb{E}S\|_d \simeq \|\sum_i (B_i-B_i')\|_d \simeq \|\sum_i  Z'_i\|_d$, thus by our result $\|S-\mathbb{E}S\|_d$ obeys the bound as above with $p$ replaced by $\sigma^2$. It remains to observe that $\sigma^2 = 2p(1-p)$ so $p\leqslant \sigma^2\leqslant 2p$, and that the bounds above do not change by a more than a constant when $p$ is replaced by $p'\in [p,2p]$.

\subsubsection{Exact Binomial Moments}

Binomial moments can be evaluated by means of combinatorics, which yields somewhat complicated recursions~\cite{knoblauch2008closed}. Interestingly, a byproduct of \Cref{thm:best_iid} gives an exact formula for \emph{symmetrized} binomials which has very simple form.

\subsubsection{Estimating binomial-like moments}

The line of research focused on estimating Renyi entropy of unknown probability distributions~(\cite{acharya2016estimating,obremski2017renyi}) faces the problem of estimating moments of sum of random variables in "small variance" regime, that is when $n\sigma^2 \ll 1$. For example, the collision estimator requires bounds on the $4$-th sum moment.
This has been previously done by exploiting somewhat tedious combinatorial identities, but follows easier from \Cref{thm:best_iid} and \Cref{thm:main1}.

\section{Preliminaries}

\subsection{Multinomial Expansion}
The \emph{multinomial coefficient} is defined as 
\begin{align}
\binom{d}{\mathbf{j}} = d! / \prod_{j\in\mathbf{j}} j!
\end{align}
when all components of $\mathbf{j}$ are non-negative and $\sum_{j\in\mathbf{j}}j=d$. We also extend this to $\binom{d}{\mathbf{j}}=0$ when $\min\{j:j\in \mathbf{j}\} < 0$ or $\sum_{j\in\mathbf{j}}\not = d$; this allows for concise notation. The multinomial formula takes the form 
$(\sum_i x_i)^d = \sum_{\mathbf{i}} \prod_{i\in\mathbf{i}} x^{i}$.
\begin{remark}
Factorials, and therefore binomial coefficients can be formally extended to negative numbers by means of Gamma function. Then indeed multinomial coefficients are zero when negative integers appear as downward arguments~\cite{kronenburg2011binomial}.
\end{remark}
We will occasionally use the Stirling's formula in estimation of multinomial coefficients~\cite{nemes2010coefficients,robbins1955remark}
\begin{align}
(d/\ell)! \simeq \sqrt{d/\ell}\cdot(d/\mathrm{e}\ell)^{d/\ell}.
\end{align}

\subsection{Symmetrization}
We will need the following facts about symmetrization (cf \cite{vershynin2018high}).
\begin{proposition}[Convex Symmetrization]\label{lemma:symmetrization}
For zero-mean iid $X,X'$ and convex $f$
\begin{align}
\mathbb{E}f(X) \leqslant \mathbb{E}f(X-X').
\end{align}
\end{proposition}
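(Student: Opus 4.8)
The plan is to use the standard conditioning-and-Jensen argument. First I would note that, since $X'$ is an independent zero-mean copy, for almost every value of $X$ we have
\begin{align}
\mathbb{E}[X - X' \mid X] = X - \mathbb{E}X' = X.
\end{align}
In other words, $X$ itself is realized as a conditional expectation of the symmetrized variable $X - X'$, which is exactly the structure that convexity can exploit.

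Next I would invoke the conditional form of Jensen's inequality: for convex $f$ and any sub-$\sigma$-algebra $\mathcal{G}$, one has $f(\mathbb{E}[Y\mid\mathcal{G}])\leqslant \mathbb{E}[f(Y)\mid\mathcal{G}]$ almost surely. Applying this with $Y = X - X'$ and $\mathcal{G} = \sigma(X)$, and using the identity above, gives
\begin{align}
f(X) = f\bigl(\mathbb{E}[X - X' \mid X]\bigr) \leqslant \mathbb{E}\bigl[f(X - X') \mid X\bigr]
\end{align}
pointwise almost surely. Taking expectations of both sides and applying the tower property yields $\mathbb{E}f(X) \leqslant \mathbb{E}\bigl[\mathbb{E}[f(X-X')\mid X]\bigr] = \mathbb{E}f(X-X')$, which is the claimed inequality.

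There is essentially no obstacle: the only mild technical point is integrability, for which it suffices to assume $X$ integrable (guaranteed by the zero-mean hypothesis) and to read the inequality in $[-\infty,+\infty]$ when $\mathbb{E}f(X-X')=+\infty$. The single idea worth stating explicitly is the observation that the original variable equals a conditional expectation of its symmetrization, after which the result is an immediate consequence of Jensen's inequality.
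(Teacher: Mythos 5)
Your proposal is correct and is essentially the paper's own argument: both condition on $X$, apply Jensen's inequality to the conditional expectation over the independent copy $X'$, and use the zero-mean hypothesis to identify $\mathbb{E}[X-X'\mid X]$ with $X$ before taking expectations. No substantive difference beyond your (welcome) explicit remark on integrability.
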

\begin{proof}[Proof of \Cref{lemma:symmetrization}]
By independence $\mathbb{E}f(X-X') = \mathbb{E}_{X}\mathbb{E}_{X'}[f(X-X')|X]$ and by Jensen's inequality $\mathbb{E}_{X'}[f(X-X')|X] \geqslant f(X-\mathbb{E}X')$. By the zero-mean assumption $ f(X-\mathbb{E}X') = f(X)$. 
The inequality follows by chaining these three bounds.
\end{proof}
\begin{proposition}[Moments are robust under symmetrization]\label{prop:symmetrize_moments}
For any iid random variables $\|X\|_d\leqslant \|X-X'\|_d \leqslant 2\|X\|_d$
\end{proposition}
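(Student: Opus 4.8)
I would split the claim into its two one-sided bounds, which call for different tools.

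\textbf{Upper bound.} The plan for $\|X-X'\|_d\leqslant 2\|X\|_d$ is a direct appeal to Minkowski's inequality (the triangle inequality for the $L^d$ norm, valid since $d\geqslant 1$): writing $X-X' = X + (-X')$ gives $\|X-X'\|_d \leqslant \|X\|_d + \|-X'\|_d$. Because $X$ and $X'$ are identically distributed, $\|-X'\|_d = \|X'\|_d = \|X\|_d$, so $\|X-X'\|_d \leqslant 2\|X\|_d$. Nothing beyond the iid assumption is used here.

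\textbf{Lower bound.} For $\|X\|_d \leqslant \|X-X'\|_d$ the plan is to reduce to \Cref{lemma:symmetrization}, which is already available. Apply it with the convex function $f(t)=|t|^d$ (convex on $\mathbb{R}$ since $d\geqslant 1$; in fact $f(t)=t^d$ as $d$ is even). This yields $\mathbb{E}|X|^d = \mathbb{E}f(X)\leqslant \mathbb{E}f(X-X') = \mathbb{E}|X-X'|^d$, and raising both sides to the power $1/d$ gives the claim. The only point of care is that \Cref{lemma:symmetrization} is stated for zero-mean $X$; in all uses in the sequel it is applied to the centered summands (or to $S-\mathbb{E}S$), for which this hypothesis is automatic, so in the intended usage there is nothing more to check. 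If one wants the bound for an uncentered $X$, one applies the centered version to $X-\mathbb{E}X$ and uses $X-X' = (X-\mathbb{E}X)-(X'-\mathbb{E}X')$.

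\textbf{Main obstacle.} There is essentially none: the upper bound is Minkowski and the lower bound is a one-line specialization of the preceding proposition to $f=|\cdot|^d$. The only subtlety is the direction of the Jensen step hidden inside \Cref{lemma:symmetrization} (conditioning on $X$ and using convexity in the $X'$ variable), but that has already been discharged in its proof, so no new work is required.
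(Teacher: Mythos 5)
Your proof is correct and matches the paper's own argument exactly: the lower bound via \Cref{lemma:symmetrization} with $f(u)=|u|^d$, the upper bound via Minkowski's inequality together with $\|{-X'}\|_d=\|X\|_d$. Your aside about the zero-mean hypothesis (centering before applying the lemma) is a reasonable extra precaution that the paper itself leaves implicit.
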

\begin{proof}[Proof of \Cref{prop:symmetrize_moments}]
Since $\|X\|_d = (\mathbb{E}|X|^d)^{1/d}$, the left-hand side follows by applying \Cref{lemma:symmetrization} to $f(u)=|u|^{d}$. The right-hand side is due to the triangle inequality (Minkovski's inequality for $L_p$ spaces).
\end{proof} 

\subsection{Symmetric Functions}

The $\ell$-th elementary symmetric polynomial in variables $u=(u_i)_i$ is defined as
\begin{align}
\Pi_{\ell}(u)=\sum_{i_1<\ldots<i_{\ell}} u_{i_1}u_{i_2}\cdot\ldots\cdot u_{i_{\ell}}.
\end{align}
The fundamental theorem on symmetric polynomials states that they generate all other symmetric polynomials (in a sense of the algebraic ring)~\cite{daoub2012fundamental}. We will need some facts about their extreme properties, which we recall below.

\begin{proposition}[Newton Inequalities~\cite{newton1761arithmetica}]
For $u=(u_i)_{i=1}^{n}$ let $S_{\ell}(u) \triangleq \Pi_{\ell}(u) / \binom{n}{\ell}$ be the $\ell$-th elementary symmetric mean. Then
$S_{\ell-1}(u)S_{\ell+1}(u)\leqslant S_{\ell}(u)^2$.
\end{proposition}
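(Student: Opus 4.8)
The plan is to deploy the classical real-rooted-polynomial argument for Newton's inequalities. Associate to $u=(u_i)_{i=1}^{n}$ the monic polynomial $P(x)=\prod_{i=1}^{n}(x+u_i)$, which by construction has only real roots (namely $-u_i$) and whose coefficients are exactly the scaled elementary symmetric means: $P(x)=\sum_{\ell=0}^{n}\binom{n}{\ell}S_{\ell}(u)\,x^{n-\ell}$. Thus the desired inequality $S_{\ell-1}(u)S_{\ell+1}(u)\leqslant S_{\ell}(u)^2$ is the assertion that the binomially-weighted coefficient sequence of a real-rooted polynomial is log-concave, and the whole proof reduces to two structural operations on such polynomials.

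First I would record two closure properties. (i) \emph{Differentiation}: if $Q(x)=\sum_{j=0}^{m}\binom{m}{j}a_j x^{m-j}$ has only real roots then so does $Q'$, by Rolle's theorem applied with multiplicities; moreover a one-line identity ($\binom{m}{j}(m-j)=m\binom{m-1}{j}$) gives $Q'(x)=m\sum_{j=0}^{m-1}\binom{m-1}{j}a_j x^{m-1-j}$, so differentiating merely deletes the last coefficient and renormalizes the rest with $\binom{m-1}{j}$. (ii) \emph{Reversal}: $x^{m}Q(1/x)=\sum_{j=0}^{m}\binom{m}{j}a_{m-j}x^{m-j}$ reverses the coefficient list, and it is real-rooted whenever $a_m\neq 0$ (its roots are the reciprocals of those of $Q$).

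With these in hand I would, to obtain the inequality at a fixed index $\ell\in\{1,\dots,n-1\}$: differentiate $P$ exactly $n-\ell-1$ times to reach, up to a positive scalar, the real-rooted polynomial $\sum_{j=0}^{\ell+1}\binom{\ell+1}{j}S_j\,x^{\ell+1-j}$; then reverse it, obtaining (provided $S_{\ell+1}\neq 0$) the real-rooted polynomial $\sum_{j=0}^{\ell+1}\binom{\ell+1}{j}S_{\ell+1-j}\,x^{\ell+1-j}$; then differentiate this one a further $\ell-1$ times, arriving at a degree-two real-rooted polynomial proportional to $S_{\ell+1}x^2+2S_{\ell}x+S_{\ell-1}$. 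Nonnegativity of its discriminant is precisely $S_{\ell}^2\geqslant S_{\ell-1}S_{\ell+1}$, which is the claim.

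The only loose end is the hypothesis $S_{\ell+1}\neq 0$ needed to make the reversal step degree-preserving. But if $S_{\ell+1}=0$ (equivalently $\Pi_{\ell+1}(u)=0$) the claimed inequality reduces to $0\leqslant S_{\ell}^2$ and is trivial, so this case is dispatched at the outset; alternatively, since both sides are polynomials in $u$, one may perturb $u$ to make all entries distinct and nonzero and pass to the limit. I expect the main point requiring care — rather than genuine difficulty — to be the bookkeeping of real-rootedness through the repeated differentiations and the single reversal, in particular tracking multiplicities so that Rolle's theorem delivers a \emph{full} set of real roots at every stage.
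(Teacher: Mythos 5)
Your proof is correct: the paper itself offers no argument for this proposition, stating it as a cited classical fact (Newton), so there is nothing internal to compare against; what you give is the standard real-rooted-polynomial derivation, and it goes through as written. The key bookkeeping checks out: the identity $\binom{m}{j}(m-j)=m\binom{m-1}{j}$ does make differentiation act by dropping the last coefficient and reindexing with $\binom{m-1}{j}$, Rolle with multiplicities preserves real-rootedness at every stage, the single reversal is degree-preserving exactly when $S_{\ell+1}\neq 0$, and the terminal quadratic $S_{\ell+1}x^2+2S_{\ell}x+S_{\ell-1}$ yields the inequality via its discriminant; the degenerate case $S_{\ell+1}=0$ is indeed trivial (or handled by perturbation and continuity), and note that in the paper's application $u_i=\sigma_i^2\geqslant 0$, so your argument covers more than is needed.
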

This implies the useful inequality due to Maclaurin
\begin{proposition}[Maclaurin's Inequality~\cite{maclaurin1729second,niculescu2006convex}.]\label{prop:maclauren}
For $u=(u_i)_{i=1}^{n}$ we have the inequality $S_{\ell}(u)^{1/\ell}\geqslant S_{\ell'}(u)^{1/\ell'}$ when $1\leqslant \ell<\ell'\leqslant n$ (with the equality when $u_i$ are equal).
\end{proposition}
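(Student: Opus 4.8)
The plan is to derive Maclaurin's inequality from the Newton Inequalities stated just above, via the classical principle that a log-concave sequence normalised by its first term has non-increasing geometric means. Throughout I assume the entries $u_i$ are nonnegative, which is the only regime we use (and is what makes the roots $S_\ell(u)^{1/\ell}$ meaningful in the first place).

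\emph{Main argument.} Suppose first that $S_\ell(u)>0$ for all $\ell\leqslant \ell'$. Put $r_\ell \triangleq S_\ell(u)/S_{\ell-1}(u)$ for $1\leqslant \ell\leqslant \ell'$. Dividing the Newton inequality $S_{\ell-1}(u)S_{\ell+1}(u)\leqslant S_\ell(u)^2$ by $S_{\ell-1}(u)S_\ell(u)>0$ gives $r_{\ell+1}\leqslant r_\ell$, so that $r_1\geqslant r_2\geqslant\cdots\geqslant r_{\ell'}$. Since $S_0(u)=1$, telescoping yields $S_\ell(u)=\prod_{j=1}^{\ell}r_j$, hence $S_\ell(u)^{1/\ell}$ is precisely the geometric mean of $r_1,\dots,r_\ell$. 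I would then invoke the elementary fact that the geometric mean of a non-increasing nonnegative sequence does not increase as further terms are appended: writing $G_m\triangleq(\prod_{j=1}^{m}r_j)^{1/m}$, we have $G_m\geqslant r_m\geqslant r_{m+1}$, whence $G_{m+1}^{m+1}=G_m^{m}\,r_{m+1}\leqslant G_m^{m+1}$ and so $G_{m+1}\leqslant G_m$. Chaining this from $m=\ell$ up to $m=\ell'-1$ gives $S_\ell(u)^{1/\ell}=G_\ell\geqslant G_{\ell'}=S_{\ell'}(u)^{1/\ell'}$, which is the claim.

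\emph{Degenerate and equality cases.} If $S_m(u)=0$ for some $m\leqslant \ell'$, then the Newton inequality together with $S_{m-1}(u)\geqslant 0$ forces $S_{m+1}(u)=0$, and inductively $S_m(u)=\cdots=S_{\ell'}(u)=0$; the claimed inequality then reduces to $S_\ell(u)^{1/\ell}\geqslant 0$, which is clear. For the equality statement, equality in the chain above forces $r_\ell=r_{\ell+1}=\cdots=r_{\ell'}$, i.e. equality in each intermediate Newton inequality; since equality there is equivalent to the polynomial $\prod_i(x+u_i)$ having all its roots equal, this means all the $u_i$ coincide, and the converse is immediate. I expect the only mildly delicate part to be this bookkeeping of the boundary cases and the precise equality condition; the monotonicity core itself is a three-line computation once Newton's inequalities are granted.
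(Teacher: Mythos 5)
Your derivation is correct and is exactly the route the paper intends: the paper states Maclaurin's inequality without proof, citing the classical literature, and merely remarks that it follows from Newton's inequalities --- which is precisely the standard ratio/geometric-mean argument you spell out (and the paper's only application has $u_i=\sigma_i^2\geqslant 0$, so your nonnegativity assumption costs nothing). One small repair in your degenerate case: once two consecutive means vanish, Newton's inequality no longer propagates zeros (it only gives $0\leqslant 0$), so justify $S_m(u)=0\Rightarrow S_{m+1}(u)=0$ directly from nonnegativity of the $u_i$ (fewer than $m$ entries are nonzero) rather than by induction through Newton.
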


\section{Proofs}



\subsection{Proof of \Cref{thm:main1}}

We use the fact that $d$ is an even integer and the multinomial formula to expand
\begin{align}
    \mathbb{E}\left|\sum_{i=1}^{n}Z_i\right|^d = 
        \mathbb{E}\left(\sum_{i=1}^{n}Z_i\right)^d = 
        \sum_{\mathbf{j}} \binom{d}{\mathbf{j}} \mathbb{E}[ Z_{1}^{\mathbf{j}_1}\cdot\ldots\cdot Z_{n}^{\mathbf{j}_n} ]
\end{align}
The summation is over integer tuples $\mathbf{j} \in \mathbb{Z}^n$
called also multiindices. Utilizing the independence assumption, we obtain
\begin{align}
    \mathbb{E}\left|\sum_{i=1}^{n}Z_i\right|^d =
        \sum_{\mathbf{j}} \binom{d}{\mathbf{j}} \mathbb{E}[ Z_{1}^{\mathbf{j}_1}]\cdot\ldots\cdot \mathbb{E}\mathbf[Z_{n}^{\mathbf{j}_n} ]
\end{align}
Since $Z_i$ are symmetric, all odd moment vanish. Thus,
we can write
\begin{align}
    \mathbb{E}\left|\sum_{i=1}^{n}Z_i\right|^d =
        \sum_{\mathbf{j}} \binom{d}{2\mathbf{j}} \mathbb{E}[ Z_{1}^{2\mathbf{j}_1}]\cdot\ldots\cdot \mathbb{E}\mathbf[Z_{n}^{2\mathbf{j}_n} ].
\end{align}
Since $Z_i$ are absolutely bounded by $1$ and symmetric, we have $\mathbb{E}|Z_i|^{2j}\leqslant \mathbb{E}|Z_i|^2\leqslant \mathbb{V}[Z_i]$ for $j\geqslant 1$. Denoting $\sigma^2_i = \mathbb{V}[Z_i]$ we can write
\begin{align}
    \mathbb{E}\left|\sum_{i=1}^{n}Z_i\right|^d \leqslant
        \sum_{\mathbf{j}} \binom{d}{2\mathbf{j}}\prod_{i: \mathbf{j}_i \not= 0 } \sigma_i^2. 
\end{align}
\begin{remark}
The equality is met when $Z_i$ are symmetric with values in the set $\{-1,0,1\}$, as this implies $\mathbb{E}|Z_i|^j = \mathbb{E}|Z_i|^2$.
\end{remark}
Let $\|\mathbf{j}\|_0 = \#\{i:\mathbf{j}_i\not=0\}$ be the number of non-zero indices in the multiindex $\mathbf{j}$. Clearly $\ell=\|\mathbf{j}\|_0$ can take values from $1$ to $\frac{d}{2}$ and thus
\begin{align}\label{eq:my1}
    \mathbb{E}\left|\sum_{i=1}^{n}Z_i\right|^d \leqslant \sum_{\ell=1}^{d/2}\underbrace{\sum_{\mathbf{j}:\|\mathbf{j}\|_0 =\ell} \binom{d}{2\mathbf{j}}\prod_{i: \mathbf{j}_i \not= 0 } \sigma_i^2}_{S_{\ell}}. 
\end{align}
Note that $S_{\ell}$ is multilinear of order $\ell$ in $u_i=\sigma_i^2$.
We claim that it equals the elementary symmetric polynomial, up to a constant multiplier (this is not clear a-priori as different weights could break the symmetry).
\begin{claim}
The polynomial $S_{\ell}$ is a (non-negative) multiplicity of the $\ell$-th elementary symmetric polynomial $\Pi_{\ell}$ in variables $\sigma_i^2$.
\end{claim}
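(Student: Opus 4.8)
The plan is to establish the claim by a pure symmetry-and-degree argument, treating $S_{\ell}$ as a polynomial in the variables $u_i \triangleq \sigma_i^2$. I would prove three things in turn: (i) $S_{\ell}$ is homogeneous of degree $\ell$ and multilinear, i.e. a linear combination of squarefree monomials $\prod_{i\in T}u_i$ with $|T|=\ell$; (ii) $S_{\ell}$ is invariant under permuting the $u_i$; and (iii) any polynomial with properties (i)--(ii) is a scalar multiple of $\Pi_{\ell}$, with the scalar visibly non-negative here.

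Step (i) is immediate from the definition: each term of $S_{\ell}$ is indexed by a multi-index $\mathbf{j}$ with non-negative integer entries satisfying $\sum_i 2\mathbf{j}_i = d$ and with exactly $\ell$ nonzero entries, and it contributes $\binom{d}{2\mathbf{j}}\prod_{i:\mathbf{j}_i\neq 0}u_i$; the monomial $\prod_{i:\mathbf{j}_i\neq 0}u_i$ is squarefree of total degree exactly $\|\mathbf{j}\|_0=\ell$. Hence $S_{\ell}=\sum_{|T|=\ell}c_T\prod_{i\in T}u_i$ with non-negative coefficients $c_T$, where $c_T$ collects the multinomial weights of all $\mathbf{j}$ whose support is exactly $T$.

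Step (ii) is the only genuinely substantive point, and it is precisely the subtlety flagged before the claim (``different weights could break the symmetry''). The resolution is that the weight $\binom{d}{2\mathbf{j}}=d!/\prod_i(2\mathbf{j}_i)!$ depends only on the \emph{multiset} of entries of $\mathbf{j}$, not on which coordinate carries which value. Concretely, for a permutation $\pi$ of $\{1,\dots,n\}$ let $\pi\cdot\mathbf{j}$ denote the multi-index with $(\pi\cdot\mathbf{j})_i=\mathbf{j}_{\pi^{-1}(i)}$. Then $\mathbf{j}\mapsto\pi\cdot\mathbf{j}$ is a bijection of the summation range $\{\mathbf{j}:\|\mathbf{j}\|_0=\ell,\ \sum_i 2\mathbf{j}_i=d\}$ onto itself, it preserves the weight since $\binom{d}{2\mathbf{j}}=\binom{d}{2(\pi\cdot\mathbf{j})}$, and applying $\pi$ to the variables sends $\prod_{i:\mathbf{j}_i\neq 0}u_i$ to $\prod_{i:(\pi\cdot\mathbf{j})_i\neq 0}u_i$. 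Re-indexing the sum therefore shows $S_{\ell}$ is unchanged, so $c_T=c_{\pi(T)}$ for every $\pi$; since the symmetric group acts transitively on $\ell$-subsets, all $c_T$ equal a common value $c_{\ell}\geq 0$.

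Combining, $S_{\ell}=c_{\ell}\sum_{|T|=\ell}\prod_{i\in T}u_i=c_{\ell}\,\Pi_{\ell}$, which is the claim; moreover $c_{\ell}$ equals the coefficient of $u_1\cdots u_{\ell}$, namely $\sum\binom{d}{2a_1,\dots,2a_{\ell}}$ over compositions $a_1+\dots+a_{\ell}=d/2$ with each $a_i\geq 1$, which is strictly positive because $\ell\leq d/2$. The crux is step (ii) --- the bookkeeping verifying that the multinomial weights do not destroy symmetry --- whereas step (iii) is a one-line consequence of the one-dimensionality of the space of symmetric multilinear forms of a fixed degree (equivalently, of the fundamental theorem on symmetric polynomials).
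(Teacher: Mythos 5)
Your proposal is correct and follows essentially the same route as the paper's proof: both show that the coefficient of each squarefree monomial $\prod_{i\in T}\sigma_i^2$ with $|T|=\ell$ is the sum of multinomial weights over multi-indices supported on $T$, and that these coefficients coincide for all $T$ because $\binom{d}{2\mathbf{j}}$ is invariant under permuting $\mathbf{j}$, so $S_\ell$ is a non-negative multiple of $\Pi_\ell$. Your additional remark that the common coefficient is strictly positive (being a nonempty sum of multinomial coefficients when $\ell\leqslant d/2$) is a harmless refinement of the paper's "non-negative multiplicity" statement.
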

\begin{proof}[Proof of Claim]
Indeed, consider $S_{\ell}$ as the weighted sum of monomials $\prod_{i\in I} \sigma_i^2$, where $\|I\|=\ell$. Every such a monomial appears with the coefficient $c_I \triangleq \sum_{\mathbf{j}:\mathbf{j}_i\not=0 \Leftrightarrow i\in I} \binom{d}{2\mathbf{j}}$. Due to the symmetry of the multinomial coefficient $\binom{d}{2\mathbf{j}}$, namely the invariance under permuting $\mathbf{j}$, we claim that $c_I$ is the same for every set $I$. Indeed, if $\rho(I')=I$ for a bijection $\rho$ then 
\begin{align}
    c_{I'} = \sum_{\mathbf{j}:\mathbf{j}_{i'}\not=0 \Leftrightarrow i'\in I'} \binom{d}{2\mathbf{j}} 
= 
    \sum_{\mathbf{j}:\mathbf{j}_{\rho(i)}\not=0 \Leftrightarrow \rho(i)\in I} \binom{d}{2\mathbf{j}} 
= 
     \sum_{\mathbf{j}:\mathbf{j}_{\rho(i)}\not=0 \Leftrightarrow \rho(i)\in I} \binom{d}{2\mathbf{\rho(j)}} 
= c_I
\end{align}
It follows that $S_{\ell}$ is a multiplicity of the $\ell$-th elementary symmetric polynomial (as it contains all monomials of order $\ell$ with equal coefficients). This proves the claim.
\end{proof}
We now establish extreme properties of $S_{\ell}$. Namely
\begin{claim}
The expression $S_{\ell}$ is maximized, subject to the constraint that $\sum \sigma_i^2$ is kept constant, when all $\sigma_i$ are equal. 
\end{claim}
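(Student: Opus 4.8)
The plan is to exploit the structure established in the previous claim: since $S_\ell$ is a nonnegative multiple of the $\ell$-th elementary symmetric polynomial $\Pi_\ell$ in the variables $u_i = \sigma_i^2$, it suffices to show that $\Pi_\ell(u)$ is maximized, subject to $\sum_i u_i$ being fixed (and implicitly $u_i \geqslant 0$, with the additional constraint $u_i \leqslant 1$ that turns out not to bind here), when all the $u_i$ are equal. The normalizing constant is irrelevant to the location of the maximizer, so I can work with $\Pi_\ell$ or equivalently the elementary symmetric mean $S_\ell(u) = \Pi_\ell(u)/\binom{n}{\ell}$ directly.

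The cleanest route is via Maclaurin's inequality (\Cref{prop:maclauren}), which I would apply with $\ell' = \ell$ and $\ell$ in the role of the smaller index equal to $1$: this gives $S_1(u) \geqslant S_\ell(u)^{1/\ell}$, that is, $S_\ell(u) \leqslant S_1(u)^\ell = \left(\frac{1}{n}\sum_i u_i\right)^\ell$, with equality precisely when all $u_i$ are equal. Since $\sum_i u_i$ is held constant, the right-hand side is a fixed quantity, so this is exactly the desired extremal statement: $\Pi_\ell(u) = \binom{n}{\ell} S_\ell(u) \leqslant \binom{n}{\ell}\left(\frac{1}{n}\sum_i u_i\right)^\ell$, attained at $u_1 = \cdots = u_n$. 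Translating back through the Claim, $S_\ell$ is maximized under the constraint when all $\sigma_i$ are equal, which is what we want.

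There is a minor subtlety to address: Maclaurin's inequality as stated assumes the $u_i$ are nonnegative (indeed it is typically stated for positive reals), which holds here since $u_i = \sigma_i^2 \geqslant 0$; the boundary case where some $u_i = 0$ is handled either by continuity or by noting that the inequality $S_\ell \leqslant S_1^\ell$ extends to the closure. I would also remark that the interval constraint $\sigma_i^2 \leqslant 1$ plays no role at this stage — the unconstrained-by-upper-bound maximizer already has all coordinates equal to the common value $\frac{1}{n}\sum_i \sigma_i^2 = \sigma^2 \leqslant 1$, so it is automatically feasible. I do not anticipate a genuine obstacle here; the only "work" is correctly invoking Maclaurin (or, alternatively, giving a direct Schur-concavity argument: $\Pi_\ell$ is a symmetric concave-like function on the simplex, and a smoothing/averaging argument shows any unequal configuration can be improved by moving two coordinates toward their mean — but citing Maclaurin is shorter and already available in the Preliminaries). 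The main thing to get right is the direction of the inequality and the equality condition, so that "maximized when all $\sigma_i$ equal" is correctly justified rather than merely plausible.
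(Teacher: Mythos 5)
Your argument is correct and is essentially the paper's own proof: the paper also disposes of this claim by invoking Maclaurin's inequality (\Cref{prop:maclauren}), and your instantiation with indices $1$ and $\ell$, giving $S_\ell(u)\leqslant S_1(u)^{\ell}$ with equality at equal coordinates, is just the spelled-out version of that one-line citation. The added remarks on nonnegativity and the non-binding upper bound $u_i\leqslant 1$ are fine but not needed beyond what the paper assumes.
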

\begin{proof}[Proof of Claim]
This follows by Maclaurin's Inequality in~\Cref{prop:maclauren}.
\end{proof}
Let $\sigma^2 = \frac{1}{n}\sum_i\sigma_i^2$,  from the claim we obtain
\begin{align}
    \mathbb{E}\left|\sum_{i=1}^{n}Z_i\right|^d \leqslant \sum_{\ell=1}^{d/2}\sum_{\mathbf{j}:\|\mathbf{j}\|_0 =\ell} \binom{d}{2\mathbf{j}}\cdot  \sigma^{2\ell}
\end{align}
The right-hand side is like in 
\Cref{eq:my1} with all $\sigma_i$ equal, and by the remark we know that it equals $\mathbb{E}\left|\sum_{i=1}^{n}Z'_i\right|^d$ if $Z'_i$ is symmetric with values $\{-1,0,1\}$ and has variance $\sigma^2$.

\subsection{Proof of \Cref{thm:best_iid}}

For $Z_i$ as in \Cref{thm:main1} the previous section derives the identity
\begin{align}
    \mathbb{E}\left|\sum_{i=1}^{n}Z_i\right|^d = \sum_{\ell=1}^{d/2}\sum_{\mathbf{j}:\|\mathbf{j}\|_0 =\ell} \binom{d}{2\mathbf{j}}\cdot  \sigma^{2\ell}.
\end{align}
We will further simplify this expression. Considering positive components of $\mathbf{j}$ we obtain
\begin{align}
    \sum_{\mathbf{j}:\|\mathbf{j}\|_0 =\ell} \binom{d}{2\mathbf{j}} = \sum_{\ell=1}^{d/2}\sum_{i_1 <\ldots < i_{\ell}} \sum_{\mathbf{j}: j_i\geqslant 1\Leftrightarrow i\in \{i_1,\ldots,i_{\ell}\}} \binom{d}{2\mathbf{j}}\cdot \sigma^{2\ell} .
\end{align}
Since the expression is invariant under permutations of $\mathbf{j}$ we obtain
\begin{align}
    \sum_{\mathbf{j}:\|\mathbf{j}\|_0 =\ell} \binom{d}{2\mathbf{j}}  = \sum_{\ell=1}^{d/2}\sum_{\mathbf{j}=(j_{1},\ldots,j_{\ell})\geqslant 1} \binom{d}{2\mathbf{j}} \cdot \binom{n}{\ell} 
\end{align}
where $\binom{n}{\ell}$ counts the number of choices for $i_{1},\ldots,{i_{\ell}}$. Therefore
\begin{align}
    \mathbb{E}\left|\sum_{i=1}^{n}Z_i\right|^d = \sum_{\ell=1}^{d/2}  \sum_{\mathbf{j}=(j_{1}\ldots j_{\ell})\geqslant 1} \binom{d}{2\mathbf{j}} \binom{n}{\ell} \sigma^{2\ell}.
\end{align}
We now estimate the moment up to constants. Since for any positive $a_i$ we have $(\sum_{i=1}^{d} a_i)^{1/d} \simeq (\max_i a_i)^{1/d}$ up to some absolute constants (in fact, constants are $1$ and $d^{1/d}\leqslant 2$), we obtain
\begin{align}\label{eq:moment_3}
   \left\|\sum_{i=1}^{n}Z_i\right\|_d \simeq \max_{\ell=1,\ldots,d/2}\left[ \underbrace{ \sum_{\mathbf{j}=(j_{1},\ldots,j_{\ell})\geqslant 1} \binom{d}{2\mathbf{j}} }_{F(\ell)} \cdot \binom{n}{\ell} \cdot  \sigma^{2\ell} \right]^{1/d}
\end{align}
In the next step we estimate $F(\ell)^{1/d}$.
\begin{claim}\label{cor:multinomial_bounds}
We have $F(\ell)^{1/d} \simeq \ell$.
\end{claim}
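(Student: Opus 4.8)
The plan is to prove the two-sided estimate by bounding $F(\ell)$ between $\ell^d$ and $(c\ell)^d$ for an absolute constant $c>0$. The \emph{upper bound} $F(\ell)^{1/d}\leqslant\ell$ is immediate: writing $a_i=2\mathbf{j}_i$, the sum defining $F(\ell)$ ranges over a \emph{subset} of all $\ell$-tuples of non-negative integers $(a_1,\ldots,a_\ell)$ with $\sum_i a_i=d$ (namely those with even, positive entries), and since every multinomial coefficient is non-negative and $\sum_{\mathbf{a}\geqslant 0,\ \sum_i a_i=d}\binom{d}{\mathbf{a}}=(\underbrace{1+\cdots+1}_{\ell})^d=\ell^d$ by the multinomial theorem, we get $F(\ell)\leqslant\ell^d$ at once.

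For the \emph{lower bound} I would retain a single term of the sum, the most balanced one. Put $q=\lfloor d/(2\ell)\rfloor$, which is $\geqslant 1$ exactly because $\ell\leqslant d/2$, and write $d/2=\ell q+r$ with $0\leqslant r<\ell$; take the multiindex $\mathbf{j}^\star$ with $r$ entries equal to $q+1$ and the remaining $\ell-r$ entries equal to $q$, so that $\mathbf{j}^\star\geqslant 1$ and $\sum_i 2\mathbf{j}^\star_i=d$. Then $F(\ell)\geqslant\binom{d}{2\mathbf{j}^\star}$, and I would estimate this single multinomial coefficient with Stirling's formula $m!\simeq\sqrt{m}\,(m/\mathrm{e})^m$. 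Every entry $2\mathbf{j}^\star_i$ is $\Theta(d/\ell)$ with absolute constants (again using $1\leqslant\ell\leqslant d/2$, hence $d/\ell\geqslant 2$), so the $\mathrm{e}$-powers in the denominator cancel the factor $(d/\mathrm{e})^d$ coming from $d!$, and what survives is a clean $\ell^d$ times correction factors of two shapes: a factor $d^{1/2}$ from the $\sqrt{\cdot}$ terms, and factors of the form $(d/\ell)^{-\Theta(\ell)}$ and $2^{\Theta(\ell)}$. The point is that each such correction, once raised to the power $1/d$, is bounded by an absolute constant: $d^{1/(2d)}\in[1,\mathrm{e}^{1/(2\mathrm{e})}]$, and $(d/\ell)^{\Theta(\ell)/d}=\exp\!\big(\Theta(1)\cdot(\ell/d)\log(d/\ell)\big)$ is bounded because $x\mapsto x\log(1/x)$ is bounded on $(0,1/2]$. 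This gives $\binom{d}{2\mathbf{j}^\star}^{1/d}\gtrsim\ell$, hence $F(\ell)^{1/d}\gtrsim\ell$, and combined with the upper bound proves $F(\ell)^{1/d}\simeq\ell$.

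The main obstacle is the bookkeeping in the lower bound: one must deal with the non-integrality of $d/(2\ell)$ (forcing the split of $\mathbf{j}^\star$ into entries $q$ and $q+1$ and a slightly messier Stirling expansion), and then check carefully that none of the polynomial-in-$d$, exponential-in-$\ell$ correction factors survives the $d$-th root — this is precisely where the hypothesis $\ell\leqslant d/2$ enters in an essential way, and it needs a couple of lines of elementary calculus. The upper bound, in contrast, requires no computation once one recognises $F(\ell)$ as a truncation of the identity $\ell^d=\sum_{\mathbf{a}}\binom{d}{\mathbf{a}}$.
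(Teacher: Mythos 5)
Your proposal is correct and follows essentially the same route as the paper: the upper bound by recognising $F(\ell)$ as a truncation of the full multinomial identity $\sum_{\mathbf{a}}\binom{d}{\mathbf{a}}=\ell^d$, and the lower bound by retaining a single balanced term and estimating it via Stirling, with the exponent $1/d$ killing all polynomial-in-$d$ and exponential-in-$\ell$ corrections thanks to $\ell\leqslant d/2$. The only (harmless) difference is bookkeeping: you handle non-divisibility directly with a nearly balanced multiindex of entries $q$ and $q+1$, whereas the paper instead replaces $\ell$ by a nearby $\ell'\in[\ell/2,\ell]$ for which $d/\ell'$ is an even integer and uses $F(\ell)\geqslant F(\ell')$.
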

\begin{proof}
For the lower bound we can assume that $\ell$ (and hence $d$) are sufficiently big (otherwise the bound is trivial).
We can also assume that $\ell$ divides $d$ and that $r=\lfloor d/\ell\rfloor$ is even; otherwise we replace $\ell$ with $\ell'$ between $\ell$ and $\ell/2$ which satisfies this, use 
$F(\ell)\geqslant F(\ell')$ and prove for $\ell'$.
Consider the term $2j_1=\ldots 2j_{\ell-1}=d/\ell$, we have
\begin{align*}
    F(\ell) \geqslant \binom{d}{r,\ldots,r} = \frac{d!}{(r!)^{\ell}}.
\end{align*}
Observe that $(d!)^{1/d}\simeq d$, $(r!)^{1/r} \simeq r$ and $(q!)^{1/q}\simeq q$ (Stirling's formula).
Since $r\cdot \frac{\ell}{d} \leqslant 1$ we get $(r!)^{\frac{\ell}{d}} \simeq r^{r\cdot\frac{\ell}{d}}=r$ (the relation $\simeq$ can be raised to a bounded power). This gives
\begin{align*}
F(\ell)^{1/d} \gtrsim \frac{d}{r} =\ell.
\end{align*}
As for the upper bound, we simply note that 
\begin{align*}
F(\ell)^{1/d} < \left(\sum_{\mathbf{j}=(j_1,\ldots,j_{\ell})}\binom{d}{\mathbf{j}}\right)^{1/d} \leqslant (\ell^d)^{1/d} = \ell.
\end{align*}
These two bounds completes the proof.
\end{proof}
By \Cref{cor:multinomial_bounds} and \Cref{eq:moment_3} we obtain the following, much simpler bound
\begin{align}\label{eq:sup4}
    \left\|\sum_{i=1}^{n}Z_i\right\|_d \simeq \max_{\ell=1,\ldots,d/2}\left[\ell^{d}\cdot\binom{n}{\ell}\cdot\sigma^{2\ell}\right]^{1/d}.
\end{align}
With some more effort we simplify even further. Namely, we can assume $\ell\leqslant n$ as for $\ell>n$ we have $\binom{n}{\ell} = 0$.
By the elementary inequality $(n/\ell)^{\ell} \leqslant \binom{n}{\ell} \leqslant (n\mathrm{e}/\ell)^{\ell}$ we have $\binom{n}{\ell}^{1/d} \simeq (n/\ell)^{\ell/d}$, for $\ell=1\ldots d/2$. Thus
\begin{align}\label{eq:sup5}
\begin{split}
    \left\|\sum_{i=1}^{n}Z_i\right\|_d &\simeq \max_{\ell=1,\ldots,\min(d/2,n)}\left[\ell^{d}\cdot\binom{n}{\ell}\cdot\sigma^{2\ell}\right]^{1/d} \\
&\simeq \max_{\ell=1,\ldots,\min(d/2,n)}\left[\ell\cdot (n/\ell)^{\ell/d} \cdot\sigma^{2\ell/d}\right] 
\end{split}.
\end{align}
Losing not more than a constant factor, we can extend the maximum to the continuous interval (the expression under maximum differs by at most a constant factor between two values of $\ell$ that differ by one or less).
Let $q=d/\ell$, we have the equivalent constraint $\max(2,dn/n)\leqslant q\leqslant d$ and the maximum of $d/q\cdot (n\sigma^2/d)^{1/q}\cdot q^{1/q}$. Since $q^{1/q}\simeq 1$ when $q\geqslant 1$
\begin{align}\label{eq:sup6}
    \left\|\sum_{i=1}^{n}Z_i\right\|_d &\simeq \max_{q:\max(2,d/n)\leqslant q\leqslant d}\left[d/q\cdot (n\sigma^2/d)^{1/q}\right] .
\end{align}
It now suffices to analyze the auxiliary function $g(q)\triangleq 1/q\cdot a^{1/q}$ for $q>0$. The derivative test shows that it is decreasing when $a>1$ and has the global maximum at $q=\log(1/a)$ with the value $1/\mathrm{e}\log(1/a)$
when $a<1$. This behavior is illustrated on~\Cref{fig:aux_g}.
\begin{figure}[ht]
\centering
\begin{tikzpicture}[scale=1]
  \begin{axis}[
      samples=200,
      axis lines=middle,
    legend style= {at = {(1,1)}, anchor=north east},
    ytick = {0.53},
    yticklabels = {$\frac{1}{\mathrm{e}\log(1/a)}$},
    ymax = 0.7,
    restrict x to domain=0:5,
    xtick = {0,1.44},
    xticklabels = {0,$\frac{1}{\log(1/B)}$}
    ]
    \addplot [domain=0:5] {x*(0.5)^(x)};
    \addlegendentry{$g(q) = 1/q \cdot a^{1/q}$};
  \end{axis}
\end{tikzpicture}
\caption{Auxiliary function $g$ which determines the moment behavior (for $a<1$).}
\label{fig:aux_g}
\end{figure}
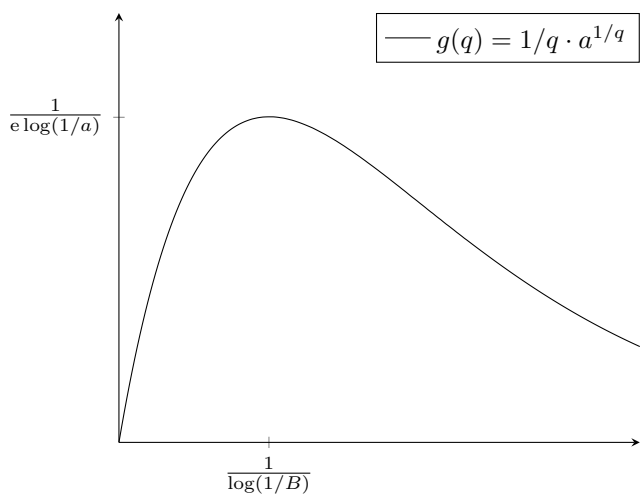

Applying this fact to \Cref{eq:sup6}, with $a=n\sigma^2/d$, and comparing $\log(1/a)$ with the interval boundaries finishes the proof.

\subsection{Proof of \Cref{cor:explicit_bounds}}

We recall the folklore fact that the moment of order $d\leqslant k$ of the sum of $k$-wise independent r.vs. can be computed as if they were independent. That is, let $X'_i$ be distributed as $X_i$ but independent. For even $d$ we have
\begin{align}
\mathbb{E}|\sum_i X_i|^d = \mathbb{E}|\sum_i X;_i|^d
\end{align} 
which follows by applying the multinomial expansion on both sides and observing that the obtained formulas depend only on products 
of at most $d$ of random variables $X_i$ (respectively $X'_i$). Without loss of generality, we can also assume that $X_i$ are centered.
Now the result follows if $X_i$ are symmetric, by \Cref{thm:main1} and \Cref{thm:best_iid}.
If they are not symmetric, we can use the general reduction as in \Cref{prop:symmetrize_moments};
namely, we apply the proof to $X'_i-X_i''$ where $X'_i,X''_i\sim^{iid} X_i$. The moments differ by at most a factor of two.
Particularly, the variance changes by a factor of 2, which has no impact on the asymptotic bounds in \Cref{eq:branches}. More precisely, we use the fact that $M(n,\sigma^2,d) \simeq M(n,{\sigma'}^2,d)$ where $\sigma^2/2\leqslant \sigma^2 \leqslant 2{\sigma'} ^2$.

\subsection{Optimized moment bound of Schmidt at al.}\label{sec:schmidt_optimized}


Up to a constant factor, their bound is equivalent to
\begin{align}\label{eq:schmidt_rewritten}
\|S-\mathbb{E}S\|_d \lesssim \mathrm{cosh}(\sqrt{d/36C}) \sqrt{dC},\quad \text{for any }C\geqslant \mathbb{V}[S].
\end{align}
Let $t = \sqrt{d/36C}$, the the upper bound is equivalent to $c\cdot d\cdot \mathrm{cosh}(t)/ t$ where $c$ is an absolute constant. 
We use this to function understand the behavior of \Cref{eq:schmidt_rewritten} on $C$, which is as illustrated on \Cref{fig:schmidt}.
\begin{figure}[t!]
\centering
\begin{tikzpicture}[scale=0.75]
  \begin{axis}[
      samples=200,
      enlarge y limits=false,
     enlarge x limits = false,
      axis lines=middle,
      xlabel = $C$,
    legend style= {at = {(1,1)}, anchor=north east},
    ytick = {-2,1.51},
    yticklabels = {$1.5d$},
    restrict y to domain=-2:10,
    xtick = {0,1.2},
    xticklabels = {0,$C^{*}\approx 0.696 \cdot d/36$},
   clip = false
    ]
    \addplot [domain=0:3] {5+cosh(x)*1/x};
    \addlegendentry{$\mathrm{cosh}(\sqrt{d/36C}) \sqrt{dC}$};
  \end{axis}
\end{tikzpicture}
\caption{The moment bound of Schmidt at al., dependency on $C$.}
\label{fig:schmidt}
\end{figure}
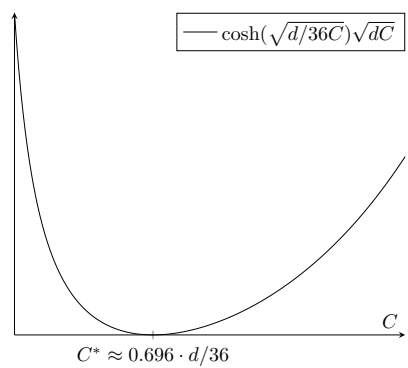
Since $C$ is constrained by $C\geqslant \mathbb{V}[S]$, the best bound is obtained
for 
\begin{align}
C = \max(C^{*},\mathbb{V}[S]),
\end{align}
which gives the claimed bound of
\begin{align}
\|S-\mathbb{E}S\|_d \lesssim \max(\sqrt{dn\sigma^2},d).
\end{align}

\section{Conclusion}

We have developed sharp estimates on the moments of (non necessarily identically distributed) sums of random variables, assuming the variance is constrained. This essentially closes the problem of establishing good concentration bounds, discussed in prior works.
Our approach demonstrates the power of \emph{symmetrization} technique, and is of independent interest. We also showed applications, not limited to $k$-wise independence.


\bibliography{citations}

\appendix

\end{document}